\newcommand{\N}{\ensuremath{\mathbb{N}}}
\newcommand{\Q}{\ensuremath{\mathbb{Q}}}
\newcommand{\E}{\mathfrak{E}}
\newcommand{\Ec}{\mathfrak{E}_c}
\newtheoremstyle{theorem}
     {11pt}
     {11pt}
     {}
     {}
     {\bfseries}
     {}
     {.5em}
     {\noindent\thmnumber{#2}. \thmname{#1}{\rm\thmnote{#3}}}
\theoremstyle{theorem}
\newtheorem{defi}{Definition}[section]
\newtheorem{teo}[defi]{Theorem}
\newtheorem{prop}[defi]{Proposition}
\newtheorem{lema}[defi]{Lemma}
\newtheorem{cor}[defi]{Corollary}
\newtheorem{pre}[defi]{Question}
\newtheorem{ex}[defi]{Example}
\title[]{Hyperspaces of dimension 1}
\author[A. Zaragoza]{Alfredo Zaragoza}
\address[A. Zaragoza]{Departamento de Matemáticas, Facultad de Ciencias, Universidad Nacional Autónoma de México, Circuito Exterior s/n, Ciudad Universitaria, Coyoacán, 04510, Mexico city, Mexico
}
\email[A. Zaragoza]{soad151192@icloud.com}
\thanks{This work is part of the doctoral work of the author at UNAM, Mexico city, under the direction of the Hernández-Gutiérrez. This research was supported by a CONACyT doctoral scholarship with number 696239.}
\keywords{Erd\H{o}s space, almost zero-dimensional space, cohesive space, Vietoris hyperspace, one-dimensional.}
\subjclass[2010]{Primary: 54F65, Secondary: 54F50, 54A10, 54B20, 54H05.}
\begin{document}

\begin{abstract}
In a previuos paper the author asked if there exists a one-dimensional space $X$ that is not almost zero-dimensional, such that the dimension of the hyperspace of compact subsets of $X$ is one-dimensional.
In this short note we give examples of spaces $X$ that are not almost zero-dimensional such that $X$ is one-dimensional and their hyperspace of compacta of $X$ also is one-dimensional. 
\end{abstract}

\maketitle

\section{Introduction}

All spaces will be assumed to be separable and metrizable.
A space $X$ is zero-dimensional\index{zero-dimensional space} if it has a base of clopen sets. A space $X$ is one-dimensional if and only if it has a base $\beta$ of neighborhoods such that $ bd_X (U)$ and is zero-dimensional and nonempty for any $U\in \beta$. If $ X $ has dimension one we write $dim(X)=1$. In general we can define the dimension of a space $ X $ for any $n\in \N$ (see \cite{vm-inf_dim_funct_spaces}) but in this work we will only use the definition of dimension 0 and 1.
 For a space $X$, $\mathcal{K}(X)$ denotes the hyperspace of non-empty compact subsets of $X$ with the
Vietoris topology; for any $n\in  \N$, $\mathcal{F}_n(X)$ 
is the subspace of $\mathcal{K}(X)$ consisting
of all the non-empty subsets that have cardinality less or equal to $n$; and $\mathcal{F}(X)$ is the subspace of $\mathcal{K}(X)$ of finite subsets of $X$. 
For $n\in \N$ and subsets $U_1,\ldots, U_n$ of a topological space $X$, we denote by $\langle  U_{1},\ldots ,U_{n}\rangle$ the collection $\left\lbrace   F \in \mathcal{K}(X):F\subset \bigcup_{k=1}^n U_k, F\cap U_{k}\neq \emptyset \textit{ for } k \leq n \right\rbrace $. Recall that the Vietoris topology on $\mathcal{K}(X)$ has as its canonical base all the sets of the form $\langle  U_{1},\ldots ,U_{n}\rangle$ where $U_k$ is a non-empty open subset of $X$ for each $k\leq n$.
Note that if $X$ is a separable metrizable space, then they every subspace of $\mathcal{K}(X)$ is also a separable metrizable space (see \cite[Theorem 3.3 and Propositions 4.4 and 4.5.2]{Sub}). In \cite{zaragoza-1} it was shown that if $X$ is a almost-zero dimensional space, then $dim(X)=dim( \mathcal{K}(X))$.
We are going to show spaces $X$ of one dimension that are not almost zero dimensional such that $dim(\mathcal{K}(X))=1$. The main results of this work are:

\begin{teo}\label{pol}
There exists a connected space $ X $ such that $ dim (X) = dim (\mathcal{K} (X)) = 1 $.
\end{teo}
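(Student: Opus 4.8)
The plan is to produce $X$ as a connected, one-dimensional space carrying a \emph{dispersion point} $p$---that is, a point such that $Y:=X\setminus\{p\}$ is totally disconnected and, crucially, \emph{almost zero-dimensional}---and then to reduce the computation of $\dim\mathcal{K}(X)$ to the case of almost zero-dimensional spaces already settled in \cite{zaragoza-1}. A concrete candidate is a suitable variant of Cantor's leaky tent (the Knaster--Kuratowski fan), or a one-point connectification of complete Erd\H{o}s space $\Ec$; in either case $Y$ is almost zero-dimensional with $\dim Y=1$, while $X=Y\cup\{p\}$ is connected. Connectedness gives at once that $X$ is not zero-dimensional, whereas the decomposition of $X$ into two zero-dimensional pieces coming from the construction, together with the standard sum theorem, gives $\dim X\le 1$; hence $\dim X=1$.

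The lower bound $\dim\mathcal{K}(X)\ge 1$ is immediate: the map $x\mapsto\{x\}$ is a homeomorphism of $X$ onto $\mathcal{F}_1(X)\subseteq\mathcal{K}(X)$, and dimension is monotone on subspaces of separable metric spaces, so $\dim\mathcal{K}(X)\ge\dim\mathcal{F}_1(X)=\dim X=1$.

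The whole difficulty lies in the upper bound $\dim\mathcal{K}(X)\le 1$. I would split $\mathcal{K}(X)$ according to the dispersion point: let $\mathcal{B}=\{K\in\mathcal{K}(X):p\in K\}$, which is closed, and for each $n$ let $\mathcal{A}_n=\{K\in\mathcal{K}(X):K\cap B(p,1/n)=\emptyset\}$, which is also closed in $\mathcal{K}(X)$. Since every $K\in\mathcal{A}_n$ is a compact subset of the closed set $X\setminus B(p,1/n)\subseteq Y$, and closed subspaces of almost zero-dimensional spaces are again almost zero-dimensional, of dimension at most one, the main theorem of \cite{zaragoza-1} yields $\dim\mathcal{A}_n\le\dim\mathcal{K}(X\setminus B(p,1/n))\le 1$. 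As $\{K:p\notin K\}=\bigcup_n\mathcal{A}_n$, we obtain $\mathcal{K}(X)=\mathcal{B}\cup\bigcup_{n}\mathcal{A}_n$, a countable cover of $\mathcal{K}(X)$ by sets closed in $\mathcal{K}(X)$. By the countable closed sum theorem it then suffices to prove $\dim\mathcal{B}\le 1$, after which $\dim\mathcal{K}(X)\le 1$ follows; combined with the lower bound this gives $\dim\mathcal{K}(X)=1$.

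The main obstacle is precisely the bound $\dim\mathcal{B}\le 1$ for the compacta that pass through the dispersion point. The naive idea---sending $K\mapsto K\setminus\{p\}$ to reduce to $\mathcal{K}(Y)$---fails because $K\setminus\{p\}$ need not be compact, and the truncation $K\mapsto K\setminus B(p,1/n)$ is not continuous on the Vietoris hyperspace. To get around this I would cover $\mathcal{B}$ by countably many closed pieces on which such a truncation \emph{is} continuous: using that $Y$ is almost zero-dimensional, one can choose, for each compactum through $p$, arbitrarily small ``annular'' separations about $p$ whose boundaries meet the compactum in a controlled, zero-dimensional way, and on the resulting closed pieces identify $\mathcal{B}$ locally with hyperspaces of closed almost zero-dimensional subspaces of $Y$, to which \cite{zaragoza-1} again applies. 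A final appeal to the countable closed sum theorem then bounds $\dim\mathcal{B}\le 1$. I expect the genuinely delicate point to be organizing these annular separations so that the pieces are simultaneously closed in $\mathcal{B}$ and of dimension at most one; this is exactly where the almost zero-dimensional structure of $X\setminus\{p\}$, rather than mere total disconnectedness, is essential.
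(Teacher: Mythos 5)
Your overall architecture coincides with the paper's: the same candidate space (a one-point connectification of a cohesive almost zero-dimensional space such as $\E$ or $\Ec$), the same lower bound via $\mathcal{F}_1(X)\cong X$, and the same decomposition $\mathcal{K}(X)=\mathcal{B}\cup\bigcup_n\mathcal{A}_n$, with each $\mathcal{A}_n=\mathcal{K}(X\setminus B(p,1/n))$ handled by \cite{zaragoza-1} and the countable closed sum theorem. But the entire content of the theorem is the bound $\dim\mathcal{B}\le 1$, and there your proposal stops at a plan rather than a proof. You correctly observe that a truncation $K\mapsto K\setminus B(p,1/n)$ is not Vietoris-continuous, and you propose to repair this by covering $\mathcal{B}$ with countably many closed pieces, on each of which one such truncation identifies the piece with a hyperspace of a closed AZD subspace of $Y$. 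As written this cannot work: a single truncation discards $K\cap B(p,1/n)$, so it is badly non-injective and gives no control on the dimension of the piece; the compacta in $\mathcal{B}$ with infinitely many points accumulating at $p$ are precisely the ones that no single level of truncation captures. You also give no construction of the ``annular separations,'' no reason the resulting pieces are closed, and no reason they cover $\mathcal{B}$ --- you explicitly defer the delicate point.

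The idea your proposal is missing is to use \emph{all} truncations simultaneously: the paper embeds $\mathcal{B}$ (its $\mathcal{S}$) into the countable product $\prod_{n}[\mathcal{K}(E_n)\cup\{\emptyset\}]$ via $K\mapsto(K\cap E_1,K\cap E_2,\dots)$, with image the set $\mathcal{L}$ of coherent sequences; since almost zero-dimensionality passes to countable products and to subspaces, $\mathcal{L}$ is AZD and hence at most one-dimensional, and no sum theorem inside $\mathcal{B}$ is needed. (Even there, continuity of the coordinate maps at compacta meeting the sphere $d(\cdot,q)=1/n$ is exactly the delicate issue you identified and must be handled with care.) A smaller slip: your claim that $X$ ``decomposes into two zero-dimensional pieces'' is false as stated, since $\Ec$ is not zero-dimensional; the equality $\dim X=1$ instead comes from the fact that one-point connectifications of cohesive AZD spaces are connected and at most one-dimensional (see \cite{ME}).
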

\begin{teo}\label{dijstra}
There exists a totally disconnected space $X$ which is not AZD such that $ dim (X) = dim (\mathcal{K} (X)) = 1 $.
\end{teo}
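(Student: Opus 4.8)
The plan is to let $X$ be the totally disconnected, one-dimensional space, due to Dijkstra, that fails to be almost zero-dimensional, and then to verify the single nontrivial assertion $dim(\mathcal{K}(X))=1$. The lower bound $dim(\mathcal{K}(X))\ge 1$ requires no work: the assignment $x\mapsto\{x\}$ is a homeomorphism of $X$ onto the subspace $\mathcal{F}_1(X)$ of $\mathcal{K}(X)$, which is closed because being a singleton is a closed condition in the Vietoris topology, and since covering dimension is monotone on subspaces of separable metrizable spaces we get $dim(\mathcal{K}(X))\ge dim(\mathcal{F}_1(X))=dim(X)=1$. Everything therefore rests on the reverse inequality $dim(\mathcal{K}(X))\le 1$.

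For the upper bound I would argue directly from the definition of dimension used in this paper: it suffices to produce, around each $\mathcal{A}\in\mathcal{K}(X)$ and inside each prescribed basic neighborhood, a Vietoris-open set $W$ with $dim(\partial W)\le 0$. Beginning with a basic set $\langle V_1,\ldots,V_n\rangle$ containing $\mathcal{A}$ and using that $dim(X)=1$ supplies a base of open sets with zero-dimensional boundary, I would shrink each $V_k$ to an open $U_k$ with $dim(\partial U_k)\le 0$ and set $W=\langle U_1,\ldots,U_n\rangle$. A routine computation then gives $\partial W\subseteq\{K\in\mathcal{K}(X):K\cap B\ne\emptyset\}$, where $B=\bigcup_{k\le n}\partial U_k$ is a zero-dimensional closed subset of $X$ by the countable closed sum theorem, so the whole problem reduces to showing that this trace of the boundary is zero-dimensional.

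The crux, and the place where the particular features of Dijkstra's space must be used rather than the bare fact $dim(X)=1$, is exactly the estimate $dim(\partial W)\le 0$. The witness-topology method of \cite{zaragoza-1} is not available here: a coarser zero-dimensional topology making $\mathcal{K}(X)$ almost zero-dimensional would, since that property is hereditary, pass to the closed copy $\mathcal{F}_1(X)\cong X$ and contradict the choice of $X$. Instead I would use the realization of $X$ as a subspace of a countable product of zero-dimensional spaces, with one-dimensionality produced, as in the Erd\H{o}s-space constructions of Dijkstra and van Mill, by a summability condition across the coordinates. In such a representation the $U_k$ can be chosen so that their boundaries depend on finitely many coordinates and are intersections of clopen subsets of $X$; then $\{K:K\cap B\ne\emptyset\}$ splits, via the sum theorem, into countably many closed pieces on which the Vietoris topology is generated by clopen sets, forcing $dim(\partial W)\le 0$.

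The main obstacle I anticipate lies in the bookkeeping around the points at which almost zero-dimensionality fails, which are exactly the points where the coordinate boundaries cannot be taken clopen: I must show that the compacta meeting the (small) set of such points form only a zero-dimensional subfamily of $\mathcal{K}(X)$, closed enough that the countable closed sum theorem absorbs it without raising the dimension. Establishing that this exceptional family is genuinely zero-dimensional is where I expect the real effort of the proof to concentrate.
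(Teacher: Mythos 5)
Your lower bound is fine, but the upper bound fails at the reduction step, and the failure is structural rather than a matter of bookkeeping. You reduce everything to showing that $\{K\in\mathcal{K}(X):K\cap B\neq\emptyset\}$ is zero-dimensional; this set is never zero-dimensional here. Fix $b\in B$ and a small open $V\ni b$: the assignment $K\mapsto K\cup\{b\}$ embeds $\mathcal{K}(X\setminus \overline{V})$ into $\{K:K\cap B\neq\emptyset\}$, and $\mathcal{K}(X\setminus \overline{V})$ contains a closed copy of $X\setminus \overline{V}$, which still has dimension $1$. Worse, the obstruction already sits inside $\partial W$ itself: for a single open, non-clopen $U$ one has $\partial\langle U\rangle=\{K\subseteq\overline{U}:K\cap \partial U\neq\emptyset\}$, which contains $\{K'\cup\{b\}:K'\in\mathcal{K}(U)\}$ for any fixed $b\in\partial U$, a copy of $\mathcal{K}(U)$; the same phenomenon persists for $\langle U_1,\ldots,U_n\rangle$. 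Since the spaces in question are cohesive off at most one point, every nonempty open subset of $X$ is one-dimensional, so $\dim(\partial\langle U_1,\ldots,U_n\rangle)\geq 1$ whenever some $U_j$ fails to be clopen --- and they cannot all be clopen, as $X$ is not zero-dimensional. Hence there is no base for $\mathcal{K}(X)$ consisting of basic Vietoris sets with zero-dimensional boundary, and no choice of coordinate representation or clopen approximation of the sets $\partial U_k$ can rescue the plan: open sets witnessing $\dim(\mathcal{K}(X))\leq 1$ must be of a different shape.

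The paper's route is entirely different and worth contrasting with yours. It takes $X=P$, built inside $\Ec^{+}\times(\{0\}\cup\{1/n:n\in\N\})$, whose whole point is that the failure of almost zero-dimensionality is concentrated at a single point $q$, so that $X\setminus\{q\}$ is AZD. One then writes $\mathcal{K}(X)$ as the union of the open subspace $\mathcal{K}(X\setminus\{q\})$, which is AZD by Proposition \ref{HZD} and hence at most one-dimensional, and the closed subspace $\mathcal{S}$ of compacta containing $q$. The content of Lemma \ref{teorp} is that $\mathcal{S}$ is homeomorphic to the subspace $\mathcal{L}$ of $\prod_{n}[\mathcal{K}(E_n)\cup\{\emptyset\}]$, where $E_n$ is the complement of the $1/n$-ball around $q$; so $\mathcal{S}$ is again AZD and of dimension at most $1$, and the countable closed sum theorem gives $\dim(\mathcal{K}(X))\leq 1$ (Theorem \ref{dimendiondeZ}). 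If you want to repair your argument, the missing idea is precisely this: isolate the non-AZD locus (here a single point) and treat the compacta through it via the product representation, rather than via boundaries of Vietoris-basic sets, whose traces are always at least one-dimensional in this setting.
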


The Theorem \ref{pol} was suggested in a letter by Roman Pol.

\section{Preliminaries}

 A space $(X,\mathcal{T})$ is almost zero-dimensional(AZD) if
there is a zero-dimensional topology $\mathcal{W}$ in $X$ such that $\mathcal{W}$ is coarser than $\mathcal{T}$ and has the property
that every point in $X$ has a local neighborhood base consisting of sets that are closed with respect
to $\mathcal{W}$.
 This concept was
introduced by Oversteegen and Tymchatyn in \cite{k}. They proved that almost zero-dimensional
spaces are at most 1-dimensional.
Recall that Erd\H{o}s space is defined as:
$$ \mathfrak{E} = \{(x_n)_{n\in \omega} \in  \ell^2 : x_i \in  \Q , \textit{for all } i\in \omega \};$$
and complete Erd\H{o}s space as 
$$ \mathfrak{E}_{\mathrm{c}} = \{(x_n)_{n\in \omega} \in  \ell^2 : x_i \in  \{0\}\cup \{1/n: n\in \N\} \textit{ for all } i\in \omega \}$$
when $\ell^2$ is the Hilbert space of all square summable real sequences. 
It's known that Erd\H{o}s space, and complete Erd\H{o}s space are almost zero-dimensional spaces
which are not zero-dimensional spaces(see \cite{ME}).
 A space $X$ is called cohesive if every point of the space has a neighborhood that does not contain nonempty proper clopen subsets of $X$.
 
\begin{lema}[{\cite{er}}]\label{1cohesion}

$\E$ and $\Ec$ are cohesive spaces.
\end{lema}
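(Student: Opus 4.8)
The plan is to verify the definition directly, producing for each point a neighbourhood containing no nonempty clopen subset of the space. First I would exploit the homogeneity of $\E$ and of $\Ec$ (both are known to be homogeneous, indeed homeomorphic to their own countable powers): if a single base point, say the origin $0$, admits a neighbourhood $U$ with no nonempty clopen subset, then applying a homeomorphism carrying $0$ to an arbitrary point $x$ transports $U$ to a neighbourhood of $x$ with the same property, since homeomorphisms preserve clopenness and the containment $C\subseteq U$. Thus it suffices to treat $0$, and for $0$ it is natural to try the trace on the space of a norm ball $B(0,r)=\{y:\|y\|<r\}$.

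For $\E$ I would reduce the statement to the assertion that \emph{every nonempty clopen subset of $\E$ is unbounded in the $\ell^2$ norm}; granting this, no bounded ball can contain a nonempty clopen set, so each $B(0,r)\cap\E$ witnesses cohesion at $0$. To prove unboundedness I would argue by contradiction in the classical Erd\H{o}s style: suppose $C$ is nonempty, clopen, and contained in $B(0,M)$, fix $y\in C$, and construct a point $w\in\E$ coordinate by coordinate, at each stage moving the current coordinate as far from $0$ as membership in $C$ permits (using that each coordinate set $\Q$ is unbounded and that $C$ is open, so small rational perturbations keep us inside $C$) while keeping the accumulated squared increments summable and bounded by $M^2$. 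The resulting $w$ would then be simultaneously a limit of points of $C$ and a limit of points of $\E\setminus C$, so $w\in\mathrm{bd}(C)$; since $C$ is clopen this boundary is empty, a contradiction. Hence $C$ is unbounded.

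The complete Erd\H{o}s space $\Ec$ is itself bounded, so the unboundedness reduction is unavailable and the argument must be localized. Here I would run the same boundary-point construction inside a fixed small ball $B(0,r)\cap\Ec$: starting from a hypothetical nonempty clopen $C$ in that ball and a point $y\in C$, I would perturb the coordinates along the convergent sequence $\{0\}\cup\{1/n:n\in\N\}$, which now supplies the admissible adjustment directions, and use the completeness of $\Ec$ to guarantee that the coordinatewise limit $w$ actually belongs to $\Ec$ and again lies on $\mathrm{bd}(C)$, contradicting clopenness.

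I expect the main obstacle in both cases to be exactly this infinite coordinatewise construction: one must choose the perturbations in infinitely many coordinates so that the series of squared increments converges (keeping $w$ in $\ell^2$ and, for $\Ec$, inside the prescribed ball) while simultaneously arranging that $w$ is approached both from inside and from outside $C$. Controlling these tails is the delicate point, and it is completeness that rescues the $\Ec$ case, where the limit point could otherwise fail to exist in the space.
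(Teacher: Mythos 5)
First, note that the paper itself gives no proof of this lemma: it is quoted directly from Erd\H{o}s's 1940 paper, so your argument is being compared with the classical one rather than with anything in the text. For $\E$ your strategy is indeed the classical route: reduce cohesion to the statement that every nonempty clopen subset of $\E$ is unbounded in the $\ell^2$-norm, so that bounded balls witness cohesion (the homogeneity step is not even needed, since $B(x,r)\cap\E$ is already a bounded neighbourhood of an arbitrary $x$).

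There are, however, two genuine problems. First, your premise that ``$\Ec$ is itself bounded'' is false: $1=1/1$ is an admissible coordinate value, so the point whose first $n$ coordinates equal $1$ and whose remaining coordinates vanish lies in $\Ec$ and has norm $\sqrt{n}$. Hence the unboundedness reduction is available for $\Ec$ verbatim --- Erd\H{o}s's theorem is precisely that every nonempty clopen subset of $\Ec$ is unbounded --- and the ``localized'' substitute you propose is both unnecessary and, as stated, ambiguous about whether $C$ is clopen in the ball or clopen in $\Ec$; cohesion, as defined in this paper, concerns clopen subsets of the whole space contained in a neighbourhood. (Also, the role you assign to completeness is off: the relevant feature of $\Ec$ is that each coordinate set $\{0\}\cup\{1/n:n\in\N\}$ is compact, so suprema of admissible coordinate values are attained; completeness of $\Ec$ as a space is not what makes the limit point exist --- square-summability of the constructed coordinates does that in both cases.) Second, the heart of the matter is only gestured at. After choosing each $x_n$ within $2^{-n}$ of the supremum of admissible $n$-th coordinates, boundedness of $C$ gives $x=(x_n)_n\in\ell^2$ and near-maximality gives points outside $C$ converging to $x$; but one must also show that $x$ is a limit of points \emph{of} $C$, which requires controlling the tails $\sum_{i>n}(z^{(n)}_i)^2$ of witnesses $z^{(n)}\in C$ agreeing with $x$ in the first $n$ coordinates, and this does not follow from $\|z^{(n)}\|\le M$ alone. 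You correctly identify this as the delicate point, but identifying it is not the same as resolving it; as written, the proposal cannot be completed without the additional idea that makes this tail estimate work.
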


A one-point connectification of a space $X$ is a connected extension $Y$ of the
space such that the remainder $Y \setminus X$ is a singleton.

\begin{ex}\label{ejemdeecp}
Let $ p $ be a point outside $ \Ec $, consider $\Ec^+=\Ec\cup\{p\}$ whose neighbourhoods of $\{p\}$ are the complements of closed bounded sets of $\Ec$. Then $\Ec^+$ is metric separable connected space.
% Note that $ \Ec $ is a separable space, then there exists a dense countable subset $D$ of $ \Ec $. Therefore $D\cup \{p\}$ is a dense countable subset of  $\E_c^+$. 

\end{ex}

It is known that if a space admits a one-point connectification, then it is cohesive. Moreover if an almost zero-dimensional space is cohesive, then it admits a one point connectification (see \cite[Proposition 5.4, p. 22]{ME}).

Let $ X $ be an $ AZD $ and cohesive space (for example $\E$, $\Ec$), then $ X $ has a one-point connectification. Suppose that $ Y =\{p\}\cup X$ where $p\notin X$. Since $ Y $ is connected then $ Y $ is not an AZD space.

Now let us consider $\Ec^+$ of  example \ref{ejemdeecp} and $N=\{0\}\cup \{1/n:n\in \N\}$. Let  $$P=[\Ec\times \{1/n:n\in \N\}]\cup (p,0)$$ with the topology inherited from $\Ec^+\times N$, then is a totally disconnected and is not an AZD space (see \cite[Example 3.6]{sums}).
The spaces $Y$ and $P$ are the spaces we will use to prove the main result.

Another important result for proving the main Theorems is the following: 
\begin{prop}\label{HZD}\cite[Proposition 2.2]{zaragoza-1}
$X$ is an $AZD$ space if only if $\mathcal{K}(X)$ is an $AZD$ space.

\end{prop}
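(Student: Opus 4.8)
My plan is to move the witnessing zero-dimensional topology back and forth between $X$ and $\mathcal{K}(X)$, treating the two implications separately. For the implication ``$\mathcal{K}(X)$ is $AZD$ $\Rightarrow$ $X$ is $AZD$'' I would first record that being $AZD$ is hereditary: if $\mathcal{W}$ witnesses that a space $Z$ is $AZD$ and $A\subseteq Z$, then $\mathcal{W}|_{A}$ is a coarser zero-dimensional topology on $A$, and intersecting with $A$ a $\mathcal{W}$-closed neighborhood base at a point of $A$ produces a neighborhood base (in the subspace topology) of $(\mathcal{W}|_{A})$-closed sets. Since $x\mapsto\{x\}$ is a homeomorphism of $X$ onto the subspace $\mathcal{F}_1(X)$ of $\mathcal{K}(X)$, heredity immediately gives that $X$ is $AZD$.

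For the converse, let $\mathcal{T}$ be the topology of $X$ and let $\mathcal{W}\subseteq\mathcal{T}$ be a zero-dimensional witness, so every $x\in X$ has a base of $\mathcal{T}$-neighborhoods that are $\mathcal{W}$-closed. Because $\mathcal{W}$ is coarser than $\mathcal{T}$, every $\mathcal{T}$-compact set is $\mathcal{W}$-compact, hence $\mathcal{K}(X)$ is, as a set, contained in the Vietoris hyperspace of $(X,\mathcal{W})$. The first step is the classical observation that the Vietoris hyperspace of a zero-dimensional separable metrizable space is again zero-dimensional: when $G_1,\dots,G_k$ are $\mathcal{W}$-clopen the set $\langle G_1,\dots,G_k\rangle$ is clopen in the $\mathcal{W}$-Vietoris topology, and such sets form a base. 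Let $\mathcal{V}_0$ be the topology induced on $\mathcal{K}(X)$ by this hyperspace; then $\mathcal{V}_0$ is zero-dimensional and, since $\mathcal{W}\subseteq\mathcal{T}$, it is coarser than the Vietoris topology $\mathcal{V}$ of $\mathcal{K}(X)$.

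The heart of the argument is to show that the $\mathcal{V}_0$-closed sets form a $\mathcal{V}$-neighborhood base. Fix $K\in\mathcal{K}(X)$ and a basic $\mathcal{V}$-neighborhood $\langle U_1,\dots,U_n\rangle$ of $K$. Using that the $\mathcal{W}$-closed $\mathcal{T}$-neighborhoods form a base and that $K$ is compact, I would cover $K$ by finitely many $\mathcal{T}$-open sets whose $\mathcal{W}$-closed ``parents'' each lie inside some $U_i$, and for each $i$ choose a $\mathcal{W}$-closed $\mathcal{T}$-neighborhood $D_i\subseteq U_i$ of a point of $K\cap U_i$. Writing $F$ for the (finite, hence $\mathcal{W}$-closed) union of all these closed sets, I would set
$$ M=\{L\in\mathcal{K}(X):L\subseteq F\}\cap\bigcap_{i=1}^{n}\{L\in\mathcal{K}(X):L\cap D_i\neq\emptyset\}. $$
Since $F$ and each $D_i$ are $\mathcal{W}$-closed, both $\{L:L\subseteq F\}$ and each $\{L:L\cap D_i\neq\emptyset\}$ are $\mathcal{V}_0$-closed, so $M$ is $\mathcal{V}_0$-closed; moreover $M\subseteq\langle U_1,\dots,U_n\rangle$ because $F\subseteq\bigcup_iU_i$ and $D_i\subseteq U_i$. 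Taking the Vietoris set determined by the $\mathcal{T}$-open interiors of the covering sets and of the $D_i$ would exhibit $K$ in the $\mathcal{V}$-interior of $M$. Hence every $\mathcal{V}$-neighborhood of $K$ contains a $\mathcal{V}_0$-closed one, so $\mathcal{V}_0$ witnesses that $\mathcal{K}(X)$ is $AZD$.

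I expect the main obstacle to be precisely this last construction: the ``hitting'' requirements $L\cap U_i\neq\emptyset$ that define $\langle U_1,\dots,U_n\rangle$ are open rather than closed conditions, so they cannot be kept verbatim inside a $\mathcal{V}_0$-closed set. The device that should resolve this is to strengthen them to the $\mathcal{V}_0$-closed conditions $L\cap D_i\neq\emptyset$ with $D_i\subseteq U_i$ a $\mathcal{W}$-closed neighborhood, and then to use compactness of $K$ together with the open interiors to verify that the resulting closed set is still a genuine $\mathcal{V}$-neighborhood of $K$.
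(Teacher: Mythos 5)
The paper does not prove this proposition; it imports it verbatim from \cite[Proposition 2.2]{zaragoza-1}, and your plan is the natural argument one would expect there: heredity of $AZD$ plus the embedding $x\mapsto\{x\}$ onto $\mathcal{F}_1(X)$ for one direction, and for the other the Vietoris topology $\mathcal{V}_0$ induced by the witness topology $\mathcal{W}$, with the hitting conditions $L\cap U_i\neq\emptyset$ replaced by the $\mathcal{V}_0$-closed conditions $L\cap D_i\neq\emptyset$ for $\mathcal{W}$-closed neighborhoods $D_i\subseteq U_i$. Your outline is correct and complete as a plan: the set $M$ is indeed $\mathcal{V}_0$-closed, lies inside $\langle U_1,\dots,U_n\rangle$, and contains the $\mathcal{V}$-open set determined by the interiors of the covering sets and of the $D_i$, which contains $K$.
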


\section{Proof of main Theorems}

Let $Z\in \{P, Y\}$ and $d$ a metric for $Z$. For each $n\in \N$, let $B_n=\{z\in Z: d(z, q)<1/n\}$ and let $E_n=Z\setminus B_n$, where $q=p$ if $Z=Y$ or $q=(p,0)$ if $Z=P$.
Note that for any $n\in \N$, $E_n$ is an $AZD$ space and, by Proposition \ref{HZD}, $\mathcal{K}(E_n)$ is an $AZD$ space. Let $\mathcal{N}=\prod_{n\in \N} [\mathcal{K}(E_n)\cup \{\emptyset\}]$, then $\mathcal{N}$ is an $AZD$ space 
(let's consider the set $\{\emptyset \}$ as an isolated point of $\mathcal{K}(E_n)\cup \{\emptyset\}$).

Let 
$$\mathcal{L} = \{ (K_1, K_2, \ldots) \in \mathcal{N} : \textit{for } m \geq n,  
K_m \cap E_n = K_n \}, and $$
$$\mathcal{S}=\{H\in \mathcal{K}(Z): q\in H\}$$ 
Let's consider the following functions $\mathcal{G}:\mathcal{L}\to \mathcal{S} $ and $\mathcal{G}_n: \mathcal{S}\to \pi_n[\mathcal{L}]$ (where $\pi_n$ is the projection to the $n$-th coordinate) such that  
 $$\mathcal{G}(K_1, K_2, \ldots)=\{q\} \cup \bigcup_{n\in \N} K_n, and $$
$$\mathcal{G}_n(K)=K\cap E_n$$

\begin{lema}\label{teorp}
$\mathcal{G}$ is well defined and is a homeomorphism.
\end{lema}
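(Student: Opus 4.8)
The plan is to exhibit an explicit inverse and then reduce every statement about the whole sequence to a condition on a single large coordinate, using the coherence built into $\mathcal{L}$. First I would introduce the candidate inverse $\mathcal{H}\colon\mathcal{S}\to\mathcal{N}$ by $\mathcal{H}(K)=(K\cap E_1,K\cap E_2,\ldots)$, whose $n$-th entry is exactly $\mathcal{G}_n(K)$. Since $E_n\subseteq E_m$ for $n\le m$ and each $E_n$ is closed in $Z$ with $q\notin E_n$, every $K\cap E_n$ is a (possibly empty) compact subset of $E_n$ and $(K\cap E_m)\cap E_n=K\cap E_n$, so $\mathcal{H}(K)\in\mathcal{L}$. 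From $\bigcup_n E_n=Z\setminus\{q\}$ together with the monotonicity $K_n=K_m\cap E_n\subseteq K_m$ forced by membership in $\mathcal{L}$, one checks $\mathcal{G}\circ\mathcal{H}=\mathrm{id}_{\mathcal{S}}$ and $\mathcal{H}\circ\mathcal{G}=\mathrm{id}_{\mathcal{L}}$. Independently, to see that $\mathcal{G}$ lands in $\mathcal{S}$ I would prove $\{q\}\cup\bigcup_n K_n$ compact directly: given an open cover, some member $U$ contains $q$, hence $B_N\subseteq U$ for some $N$, so $Z\setminus U\subseteq E_N$ and therefore $(\{q\}\cup\bigcup_n K_n)\setminus U=K_N\setminus U$ is a closed subset of the compactum $K_N$, which finitely many further members of the cover suffice to cover. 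Thus $\mathcal{G}$ is a well-defined bijection with $\mathcal{G}^{-1}=\mathcal{H}$ and $\mathcal{G}_n=\pi_n\circ\mathcal{H}$.

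For continuity of $\mathcal{G}$ I would fix $\mathbf{K}=(K_1,K_2,\ldots)\in\mathcal{L}$, put $K=\mathcal{G}(\mathbf{K})$, and take a basic Vietoris neighbourhood $\langle U_1,\ldots,U_k\rangle$ of $K$ with $q\in U_1$. The key point is that one large coordinate controls the whole image: choose $N$ so large that $B_N\subseteq U_1$ and that, for each $i$ with $q\notin U_i$, a chosen witness point of $K\cap U_i$ lies in $E_N$. The coherence relations give, for any $\mathbf{K}'=(K_1',\ldots)\in\mathcal{L}$, the inclusion $\mathcal{G}(\mathbf{K}')\subseteq K_N'\cup U_1$, because $K_m'\subseteq K_N'$ for $m\le N$ while $K_m'\subseteq K_N'\cup B_N\subseteq K_N'\cup U_1$ for $m\ge N$. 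Hence it suffices to force $K_N'$ into the single-coordinate neighbourhood $V_N=\langle W_0,W_{i_1},\ldots,W_{i_r}\rangle$ of $K_N$ in $\mathcal{K}(E_N)$, where $W_0=(\bigcup_{i=1}^k U_i)\cap E_N$ and $W_i=U_i\cap E_N$ for the finitely many indices with $q\notin U_i$; the open set $\pi_N^{-1}(V_N)\cap\mathcal{L}$ is then the required neighbourhood of $\mathbf{K}$, since $K_N'\in V_N$ yields both $\mathcal{G}(\mathbf{K}')\subseteq\bigcup_{i=1}^k U_i$ and $\mathcal{G}(\mathbf{K}')\cap U_i\neq\emptyset$ for every $i$ (the indices with $q\in U_i$ being met by $q$ itself).

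The step I expect to be the main obstacle is the continuity of $\mathcal{G}^{-1}=\mathcal{H}$. Because $\mathcal{N}$ carries the product topology, this is equivalent to continuity of each restriction map $\mathcal{G}_n\colon K\mapsto K\cap E_n$ from $\mathcal{S}$ into $\mathcal{K}(E_n)\cup\{\emptyset\}$. The ``upper'' half is automatic: if $K\cap E_n\subseteq V$ with $V$ open, then $K\subseteq V\cup B_n$ and every $K'\subseteq V\cup B_n$ satisfies $K'\cap E_n\subseteq V$ (using $B_n\cap E_n=\emptyset$); and when $K\cap E_n=\emptyset$, i.e.\ $K\subseteq B_n$, the neighbourhood $\langle B_n\rangle$ maps constantly to the isolated point $\emptyset$. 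The genuinely delicate half is the ``lower'' Vietoris condition at a compactum $K$ meeting the sphere $\{z:d(z,q)=1/n\}$: a point $x\in K\cap E_n$ with $d(x,q)=1/n$ may be approached by points of nearby $K'$ lying in $B_n$, so that $K'\cap E_n$ misses a prescribed neighbourhood of $x$ and the $n$-th coordinate jumps to $\emptyset$.

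I would therefore treat this boundary phenomenon as the heart of the lemma. The natural remedy is to exploit the freedom in the exhaustion rather than argue coordinatewise by brute force: one wants the closed sets $E_n$ (equivalently the neighbourhoods $B_n$ of $q$) chosen so that each $E_n$ is relatively clopen in $Z\setminus\{q\}$, for then intersection with $E_n$ is genuinely continuous and the lower Vietoris condition is restored. For the totally disconnected space $P$ this is unproblematic, so the real difficulty is concentrated in the connected example $Y=\{p\}\cup\Ec$, where $Z\setminus\{q\}=\Ec$; there the metric spheres are unavoidably nonempty, and the substantive task is to produce an exhaustion of $\Ec$ by bounded, relatively clopen, AZD pieces that is cofinal with the metric balls $E_n$. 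Establishing the existence of such an exhaustion is, in my view, the crux; once it is in hand, the semicontinuity bookkeeping above finishes the proof that $\mathcal{G}$ is a homeomorphism.
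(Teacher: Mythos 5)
Your treatment of well-definedness, of the bijection, and of the continuity of $\mathcal{G}$ itself is sound, and the ``one large coordinate controls the whole image'' argument is cleaner than the paper's case analysis. You have also put your finger on the genuinely delicate point: $K\mapsto K\cap E_n$ is only upper semicontinuous, and the lower Vietoris condition can fail at a compactum meeting the sphere $\{z:d(z,q)=1/n\}$ from inside $B_n$. This is not a phantom difficulty: if $x$ satisfies $d(x,q)=1/n$ and lies in the closure of $B_n$ (such $x$ exist for every admissible metric on the connected space $Y$, since otherwise $B_n$ would be clopen), then $\{q,x_j\}\to\{q,x\}$ for $x_j\in B_n$ while the $n$-th coordinates jump from $\emptyset$ to $\{x\}$. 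The paper's own proof elides exactly this step; its claimed identity $\mathcal{G}_n^{\leftarrow}[\langle U_1,\ldots,U_k\rangle\cap\mathcal{K}(E_n)]=\mathcal{S}\cap\langle U_1,\ldots,U_k,B_n\rangle$ fails because an $F\in\mathcal{S}\cap\langle U_1,\ldots,U_k,B_n\rangle$ may meet some $U_i$ only inside $B_n$, so that $F\cap E_n$ misses $U_i$.

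The gap in your proposal is that the repair you defer to as ``the crux'' is provably unattainable, so the argument does not close. You ask for an exhaustion of $Z\setminus\{q\}$ by sets that are relatively clopen in $Z\setminus\{q\}$ and whose complements form a neighbourhood base at $q$. Any such set is open in $Z$ (being open in the open subspace $Z\setminus\{q\}$) and closed in $Z$ (its complement is a neighbourhood of $q$), hence clopen in $Z$ and disjoint from $q$. For the connected space $Y$ this forces every such set to be empty, so no exhaustion exists; this is precisely Erd\H{o}s' theorem that nonempty clopen subsets of $\E$ and $\Ec$ are unbounded, i.e., the cohesion that makes the one-point connectification exist at all. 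The case $Z=P$, which you call unproblematic, is obstructed for the same reason: a relatively clopen $E\subseteq\Ec\times\{1/m:m\in\N\}$ whose complement is a neighbourhood of $(p,0)$ must have all but finitely many slices clopen and bounded in $\Ec$, hence empty, so its complement contains an entire level $\Ec\times\{1/k\}$ and is contained in no basic neighbourhood of $(p,0)$. Thus any correct proof must confront the lower-Vietoris failure for the metric exhaustion itself (or reformulate $\mathcal{L}$ and the topology placed on it) rather than transform the $E_n$ into clopen pieces.
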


\begin{proof}
To prove that $\mathcal{G}$ is well defined, let $\mathcal{U}$ be an open cover of $\{q\} \cup \bigcup_{n\in \N} K_n$ in $Z$. Since $q\in \{q\} \cup \bigcup_{n\in \omega} K_n$, we can suppose that $B_m\in \mathcal{U}$ for some $m$. Note that $(\{q\} \cup \bigcup_{n\in \N} K_n)\setminus K_m\subset B_m$, since $Z \setminus K_m\subset B_m$. As $\{U\cap E_m: U\in \mathcal{U}\}$ is an open cover of $K_m$ and $K_m$ is a compact subset of $E_m$, then there exists $U_1,\ldots, U_k$, such that $K_m \subset \bigcup_{i\leq k}(U_i\cap E_m)$. Therefore $\{B_m, U_1,\ldots, U_k\}$ is a finte subcover of $\mathcal{U}$, that is $\{q\} \cup \bigcup_{n\in \N} K_n$ is a compact subset of $Z$. Therefore $\mathcal{G}$ is well defined.

Let's prove that $\mathcal{G}$ is injective, let $\widehat{K}=(K_1, K_2, \ldots), \widehat{H}=(H_1, H_2, \ldots) \in \mathcal{L}$ such that $\widehat{H}\neq \widehat{ K}$, then there exists $k\in \mathbb{N}$ so that $H_k\neq K_k$. Therefore there exists $x\in H_k\setminus K_k$, then $x \in \mathcal{G}(\widehat{H})$ and $x\notin \mathcal{G}(\widehat{K})$. That is, $\mathcal{G}$ is injective.
Now let's see that $\mathcal{G}$ is surjective, let $K\in \mathcal{S}$. We define $K_n=K\cap E_n$, since $E_n$ is a closed subset in $Z$, then $K_n$ is empty or is a compact subset of $E_n$. Then $K_n\in \mathcal{K}(E_n) \cup \{\emptyset\}$ for each $n\in \N$, therefore $(K\cap E_1,\ldots)\in \mathcal{L}$ and $\mathcal{G}((K\cap E_1,\ldots ))=K$. That is, $\mathcal{G}$ is surjective.
Before proving that $\mathcal{G}$ is a homeomorphism, let's show that $$\mathcal{B}=\{\langle U_1,\ldots, U_n, B_k\rangle\cap \mathcal{S} :n, k\in \N \textit{ and } U_1,\ldots, U_n
 \textit{ are } $$ $$\textit {open subsets of }Z \setminus \{ p\}  \} \cup \{\langle B_k \rangle:k\in \N\}$$ 
is a basis for $\mathcal{S}$.

Let $K\in \mathcal{S}$ and $\mathcal{W}=\langle W_1,\ldots, W_n\rangle$ an open subset of $\mathcal{K}(Z)$ such that $K\in\mathcal{W}.$
%If $K_n= \emptyset$ for each $n\in \N$, let $\mathcal{V}=\langle U_1\ldots U_{n} \rangle \cap\mathcal{S}$ such that $U_j= B_m$ and $B_m\subset W_r$ for some $m \in\N$ and $r \leq n$, then $K\in \mathcal{V}\subset \mathcal{W}$. 
If $K_n\neq\emptyset$ for some $n \in N$, then $H_r\neq\emptyset$ for $r\geq n$, without loss of generality we can assume that $n=1$. As $p\in K\in \mathcal{W}$, then there exist $j\leq n$  such that $p\in \bigcap \{W_j: j\leq n,  p\in W_j\}$, and $k\in \N$ such that $p\in B_k \subset \bigcap \{W_j: j\leq n,  p\in W_j\}$. To find an element $\mathcal{V }$ of the base $\mathcal{B}$ such that $K\in \mathcal{V}\subset \mathcal{W}$, we consider two cases. If $K\setminus B_k= \emptyset$ or if $K\setminus B_k\neq \emptyset$. If $K\setminus B_k= \emptyset$ then $K\subset B_k$. Therefore $K\in \langle B_k \rangle \subset \mathcal{W}$. If $K\setminus B_k\neq \emptyset$ then for each $x\in K\setminus B_k$ there exist $U_x$ such that $x\in U_x\subset \bigcap \{W_j: j\leq n, x\in W_j\}$, as $K\setminus B_k $ is compact and $\{U_x:x\in K\setminus B_k\}$ is an open cover of $K\setminus B_k $, there exist $x_1, \ldots, x_l$ such that $K\setminus B_k \in \langle U_{x_1},\ldots, U_{x_l}\rangle$. Let $\mathcal{V}=\langle U_{x_1},\ldots, U_{x_l}, B_k\rangle \cap\mathcal{S}$, note that $K\in \mathcal{V}$, and $\mathcal{V}\subset \mathcal{W}\cap \mathcal{S}$.
On the other hand if $ K=\{p\}$, there exist $k\in \N$ such that $p\in B_k$ and $p\in B_k \subset \bigcap \{W_j: j\leq n,  p\in W_j\}$ this implies that $K\in \langle B_k\rangle  \subset \mathcal{W}$. Therefore $\mathcal{B}$ is a basis for $\mathcal{S}$.
%Note that $\langle U_1,\ldots U_{n-1}, B_k\rangle$ is an open such that $K\in \langle U_1,\ldots U_{n-1}, B_k\rangle\subset \mathcal{W}$.
%To prove that $\mathcal{G}$ is a continuous function, it is enough to show that $\mathcal{G}^\leftarrow[\langle U_1,\ldots U_m, B_k\rangle\cap \mathcal{S}]$ (when $U_1,\ldots U_m$ are an open subsets of $Z$ ) is an open subset in $\mathcal{L}$.% because $\{\langle U_1,\ldots, U_m, B_k\rangle: k, m\in \N \textit{ and } U_1,\ldots, U_m, B_k \textit{ are an open subsets of Z} \}$ is a basis for $\mathcal{S}$.%On the other hand, since $K_k\subset K\in \mathcal{W}$, then there exist $W_1,\ldots W_l$ with $l\leq n$ such that $K_k\in \langle W_1,\ldots W_l\rangle$. Then for every $x\in K_k$, there exists $V_x$ such that $x\in V_x\subset W_j$ and $V_x\cap cl_Z(B_k)=\emptyset$ because $K_k\cap B_k=\emptyset$. Since $K_k$ is compact, then there exist $x_i,\ldots , x_m\in K_k$ such that $K_k\in \langle V_{x_1},\ldots, V_{x_m}\rangle\subset \langle W_1,\ldots, W_l\rangle$. 

Let $K=(H_1,\ldots, H_n,\ldots)\in \mathcal{L}$, and $\mathcal{U}\in \mathcal{B}$ such that $H= \mathcal{G} (K)\in \mathcal{U}$.
If $H= \{p \}$, then $\mathcal{U}=\langle B_k\rangle$ for some $k\in \N$ and $H_n=\emptyset$ for each $n\in \N$. Let $W=\{ \emptyset\}^{k}\times [\langle B_k\rangle \cap ( \mathcal{K}(E_{k+1}) \cup \{\emptyset\})] \times \prod_{m > k+1}[ \mathcal{K}(E_{m}) \cup \{\emptyset\}]$, note that $K\in W$, and $\mathcal{G}[W]\subset \mathcal{U}$. 
If $H_i\neq\emptyset$ for some $i \in \N$, then $H_r\neq \emptyset$ for $r\geq n$, without loss of generality we can assume that $i=1$, and that $\mathcal{U}=\langle U_1,\ldots, U_n, B_k\rangle$ for some $n, k\in \N$ 
. 
Let $A=\{j\in \N: U_l\cap H_j \neq \emptyset \textit{ for all } l\leq n\}$ and as $\{H_k:k\in \N\}$, is not finite, then $A\neq \emptyset$. Let $r=\min A$, if $r<k$, then $F_k\cap U_j=\emptyset$ for some $j\leq n$, so $F_r\cap U_j\neq \emptyset$ and $F_r\cap U_j \subset B_k $. Let

$$N= \{(F_1, F_2, \ldots)\in \mathcal{L}: F_r\in \langle U_1,\ldots , U_n, B_k\rangle\}.  $$
Note that $K\in N$, if $F=(F_1, F_2, \ldots)\in N$  and $\mathcal{G}(F_1, F_2, \ldots)= F$, then $p\in F\setminus F_k\subset B_r\subset B_k$, so $F\in \mathcal{U}$.
If $r\leq k$, then $H_k\setminus H_r \subset \bigcup_{j\leq n}U_j $ and $H\setminus H_k \subset B_k$. Let

$$N= \{(F_1, F_2, \ldots)\in \mathcal{L}: F_k\in \langle U_1,\ldots , U_n\rangle\}.  $$
Note that $K\in N$. If $(F_1,\ldots F_k,\ldots)\in N $, and $\mathcal{G}(F_1,\ldots F_k,\ldots)=F$, then $p\in F\setminus F_k\subset B_k$, so $F\in \mathcal{U}$.
This implies that $\mathcal{G}$ is a continuous function.
\\
Finally we will show that $\mathcal{G}^{-1}$ is a continuous function, if $U$ is a basic open subset of $\mathcal{L}$, then $U=(\bigcap_{j\in F} \pi_j^{\leftarrow} [W_j])\cap \mathcal{L}$, where $W_j$ is an open subset of $\mathcal{K}(E_j)\cup\{\emptyset\}$ and $F$ is a finite subset of $\N$.
 Hence
$$(\mathcal{G}^{-1})^{\leftarrow}[U]= \bigcap_{j\in F}(\mathcal{G}^{-1})^{\leftarrow}[\pi_j^{\leftarrow} [W_j]]\cap \mathcal{S}=\bigcap_{j\in F}\mathcal{G}_n^{\leftarrow}[W_j]. $$
So that is enough to show the continuity of $\mathcal{G}_n$ for any $n$. To prove that $\mathcal{G}_n$ is continuous, it is sufficient to show that $\mathcal{G}_n^{\leftarrow}[\langle U_1,\ldots  U_k\rangle\cap \mathcal{K}(E_n)]$ and $\mathcal{G}_n^{\leftarrow}[\{\emptyset\}]$ are open subsets of $\mathcal{S}$, where $U_1,\ldots ,U_k$ are open subsets of $Z\setminus \{p\}$ such that $E_n\cap U_j\neq \emptyset$ for each $j\leq k$.
We will show that $$\mathcal{G}_n^{\leftarrow}[\langle U_1,\ldots  U_k\rangle\cap \mathcal{K}(E_n)]=\mathcal{S}\cap \langle U_1,\ldots U_k, B_n\rangle $$ and that 
$$\mathcal{G}_n^{\leftarrow}[\{\emptyset\}]=\mathcal{S}\cap \langle B_n\rangle$$

Let $H\in \mathcal{S}\cap \langle U_1,\ldots U_k, B_n\rangle $, then $H_n\neq\emptyset$, $H\setminus H_n \subset B_n$, and $H_n\in[\langle U_1,\ldots$ $  U_k\rangle \cup \{\emptyset\}]\cap \mathcal{K}(E_n) $, thus $H\in \mathcal{G}_n^{\leftarrow}[\langle U_1,\ldots  U_k\rangle\cap \mathcal{K}(E_n)]$. Let $F \in \mathcal{G}_n^{\leftarrow}[\langle U_1,\ldots  U_k\rangle\cap \mathcal{K}(E_n)]$, then $F_n=F\cap E_n\in \langle U_1,\ldots U_k\rangle\cap \mathcal{K}(E_n)$ and $F\setminus F_n\subset B_n$ then $F\in \langle U_1 \ldots ,U_k , B_n\rangle$. Let $H\in \mathcal{S}\cap \langle B_n\rangle $, then $H_n
=\emptyset$, therefore, $H_n\in\{ \emptyset\} $, thus $H\in \mathcal{G}_n^{\leftarrow}[\{ \emptyset\}]$. Let $F \in \mathcal{G}_n^{\leftarrow}[\{\emptyset\}]$, then $F_n=F\cap E_n= \emptyset$, thus $F \subset B_n$ then $F\in \langle B_n\rangle$.
This implies that $\mathcal{G}_n$ is a continuous function.
Therefore $\mathcal{G}$ is a homeomorphism.

\end{proof}

\begin{teo}\label{dimendiondeZ}
$dim(Z)=dim(\mathcal{K}(Z))=1$ 
\end{teo}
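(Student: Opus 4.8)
The plan is to bound $dim(Z)$ and $dim(\mathcal{K}(Z))$ below by $1$ and above by $1$, reserving the homeomorphism $\mathcal{G}$ of Lemma \ref{teorp} for control of the single subspace $\mathcal{S}$.

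For the lower bounds I would argue that every zero-dimensional space is AZD (take the witnessing coarser topology $\mathcal{W}$ to be $\mathcal{T}$ itself). Since $Z$ is, in both cases $Z=Y$ and $Z=P$, not AZD, it is not zero-dimensional, so $dim(Z)\geq 1$. As $\mathcal{F}_1(Z)$ is homeomorphic to $Z$ and is a subspace of $\mathcal{K}(Z)$, monotonicity of dimension for separable metrizable spaces yields $dim(\mathcal{K}(Z))\geq dim(Z)\geq 1$.

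For the upper bounds the idea is to cut both spaces into countably many closed AZD pieces and invoke the countable closed sum theorem. Since $B_n$ is the ball of radius $1/n$ about $q$, every point $z\neq q$ lies in $E_n$ for all sufficiently large $n$; hence $Z=\{q\}\cup\bigcup_{n\in\N}E_n$ is a countable union of closed sets. The singleton $\{q\}$ has dimension $0$ and each $E_n$ is AZD, hence of dimension at most $1$, so the sum theorem gives $dim(Z)\leq 1$, and therefore $dim(Z)=1$. For the hyperspace I would write $\mathcal{K}(Z)=\mathcal{S}\cup\bigcup_{n\in\N}\langle E_n\rangle$, where $\langle E_n\rangle=\{K\in\mathcal{K}(Z):K\subseteq E_n\}$. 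A compactum $K$ missing $q$ satisfies $d(K,q)>0$ by compactness, so $K\subseteq E_n$ for large $n$, which shows the cover is exhaustive. Each $\langle E_n\rangle$ is closed (because $E_n$ is closed in $Z$) and homeomorphic to $\mathcal{K}(E_n)$, which is AZD by Proposition \ref{HZD}; and $\mathcal{S}$ is closed (since $Z$ is metric, $\{q\}$ is closed, so the complement of $\mathcal{S}$ equals the open set $\langle Z\setminus\{q\}\rangle$) and, by Lemma \ref{teorp}, homeomorphic to $\mathcal{L}\subseteq\mathcal{N}$, hence AZD as a subspace of an AZD space. All pieces thus have dimension at most $1$, and the countable closed sum theorem gives $dim(\mathcal{K}(Z))\leq 1$; together with the lower bound this finishes the proof.

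The step I expect to require the most care is confirming that the displayed cover of $\mathcal{K}(Z)$ is genuinely a \emph{closed} cover: one must check that $\langle E_n\rangle$ is Vietoris-closed and that $\mathcal{S}$ is closed, and verify the compactness estimate $d(K,q)>0$ that funnels each $q$-avoiding compactum into some $E_n$. The remaining ingredients---monotonicity of dimension, the sum theorem, the embedding $\mathcal{F}_1(Z)\cong Z$, and the AZD-ness of the individual pieces---are either standard or already in hand.
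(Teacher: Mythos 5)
Your proof is correct and follows essentially the same route as the paper: split $\mathcal{K}(Z)$ into $\mathcal{S}$ (handled via Lemma \ref{teorp} and the AZD-ness of $\mathcal{L}\subseteq\mathcal{N}$) and the compacta avoiding $q$ (handled via Proposition \ref{HZD}). Your refinement of the latter piece into the countable closed cover $\bigcup_{n}\langle E_n\rangle$, together with the explicit lower bound via $\mathcal{F}_1(Z)\cong Z$, in fact supplies the sum-theorem justification that the paper leaves implicit when it passes from $\dim(\mathcal{K}(Z\setminus\{q\}))=\dim(\mathcal{S})=1$ to $\dim(\mathcal{K}(Z))=1$.
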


\begin{proof}
Note that $\mathcal{K}(Z)=\mathcal{K}(Z\setminus \{q\})\cup \mathcal{S}$. As $\mathcal{K}(Z\setminus \{q\})$ is an AZD cohesive space, then $dim(\mathcal{K}(Z\setminus \{q\}))=1$. By Theorem \ref{teorp}, $S$ is homeomorphic to $\mathcal{L}$ and $dim(\mathcal{L})=1$ because $\mathcal{L}$ is an AZD, but $\mathcal{L}$ is not zero dimensinal space. This implies that $dim(\mathcal{S})=1$. Thus $dim(\mathcal{K}(Z))=1$.
\end{proof}

\textbf{Proof of Theorem \ref{pol}}
\begin{proof}
Let $Y$ be a one-point connectification of $\E$. By Theorem \ref{dimendiondeZ} we have the result.
\end{proof}
\begin{cor}\label{dijstra1}
Let $X$ be an cohesive and AZD spce. If $Y$ is a one-point connectification of $X$, then  $ dim (Y) = dim (\mathcal{K} (Y)) = 1 $.
\end{cor}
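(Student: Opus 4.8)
The plan is to recognize that Corollary~\ref{dijstra1} is nothing more than Theorem~\ref{dimendiondeZ} read for a general base space, with Theorem~\ref{pol} being the single instance $X=\E$. Accordingly I would take the given connectification $Y=X\cup\{p\}$, set $q=p$, fix a metric $d$ on $Y$, and build $B_n$, $E_n=Y\setminus B_n$, $\mathcal{N}$, $\mathcal{L}$ and $\mathcal{S}$ exactly as in the main section. The decisive observation is that neither those constructions nor the proofs of Lemma~\ref{teorp} and Theorem~\ref{dimendiondeZ} ever used anything about $\E$ (or the concrete space $P$) beyond three facts that are available here: $X$ is cohesive and $AZD$; the point $p$ has the neighbourhood base $\{B_n\}_{n\in\N}$ with each $E_n$ a closed subset of $X$ and $\bigcap_{n}B_n=\{p\}$; and $Y$ is connected.

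First I would check the almost zero-dimensionality of the pieces. Each $E_n$ is a closed subspace of the $AZD$ space $X$, hence $AZD$ (heredity of $AZD$), so Proposition~\ref{HZD} makes each $\mathcal{K}(E_n)$ an $AZD$ space; consequently the countable product $\mathcal{N}$ is $AZD$, and so is its subspace $\mathcal{L}$. I would then apply Lemma~\ref{teorp} verbatim to conclude that $\mathcal{G}\colon\mathcal{L}\to\mathcal{S}$ is a homeomorphism, so that $\mathcal{S}$ is $AZD$ as well. The only place in that lemma touching the geometry of the ambient space is the identity $\bigl(\bigcup_{n}K_n\bigr)\cap E_m=K_m$, which secures the compactness of $\{p\}\cup\bigcup_n K_n$; I would note that it depends solely on the nesting $E_1\subseteq E_2\subseteq\cdots$ and on $\bigcap_n B_n=\{p\}$, both automatic for a metric one-point connectification, so the argument transfers without change.

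For the dimension I would use the countable closed cover $\mathcal{K}(Y)=\mathcal{S}\cup\bigcup_{n\in\N}\mathcal{K}(E_n)$. Indeed a compactum $K$ with $p\notin K$ lies at positive distance from $p$, hence inside some $E_n$, so $K\in\mathcal{K}(E_n)$, while every remaining compactum contains $p$ and so belongs to $\mathcal{S}$. Each $E_n$ is closed in $Y$, so each $\mathcal{K}(E_n)$ is closed in $\mathcal{K}(Y)$ and, being $AZD$, has dimension at most $1$; and $\mathcal{S}=\{H\in\mathcal{K}(Y):p\in H\}$ is closed (its complement is the open set $\langle Y\setminus\{p\}\rangle$) and, by the previous paragraph, $AZD$, hence also of dimension at most $1$. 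The countable closed sum theorem then gives $dim(\mathcal{K}(Y))\le 1$. For the reverse inequality I would use that $\mathcal{F}_1(Y)\cong Y$ sits inside $\mathcal{K}(Y)$ and that $Y$, being a connected space with more than one point, satisfies $dim(Y)\ge 1$; monotonicity of dimension then yields $dim(\mathcal{K}(Y))\ge dim(Y)\ge 1$, so $dim(\mathcal{K}(Y))=1$, and the same subspace gives $dim(Y)=dim(\mathcal{F}_1(Y))\le 1$, whence $dim(Y)=1$.

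I expect the main obstacle to be not any single estimate but the bookkeeping required to certify this genericity: one must reread every step of Lemma~\ref{teorp} and Theorem~\ref{dimendiondeZ}---in particular the proof that $\mathcal{B}$ is a basis for $\mathcal{S}$ and the continuity of each $\mathcal{G}_n$---and confirm that it refers only to the abstract data $(X,p,\{B_n\}_{n\in\N})$ and never to a special property of $\E$ or $P$. Once that is verified, the corollary is exactly Theorem~\ref{dimendiondeZ} for $Z=Y$, with Theorem~\ref{pol} recovered by taking $X=\E$.
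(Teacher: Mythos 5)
Your proposal is correct, and at the top level it follows the paper exactly: the paper's own proof of this corollary is the single line ``By Theorem \ref{dimendiondeZ} we have the result,'' i.e.\ the observation that the construction of $\mathcal{L}$, $\mathcal{S}$ and the homeomorphism $\mathcal{G}$ never uses anything about $\E$ or $P$ beyond cohesion, almost zero-dimensionality, and the neighbourhood base $\{B_n\}_{n\in\N}$ at the added point --- precisely the genericity you set out to verify. Where you genuinely diverge is in the final dimension count. The paper writes $\mathcal{K}(Z)=\mathcal{K}(Z\setminus\{q\})\cup\mathcal{S}$ and asserts each piece is $1$-dimensional; since $\mathcal{K}(Z\setminus\{q\})=\langle Z\setminus\{q\}\rangle$ is open rather than closed in $\mathcal{K}(Z)$, passing from this to $\dim(\mathcal{K}(Z))\le 1$ still implicitly needs the countable sum theorem applied to an $F_\sigma$ decomposition of the open piece. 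Your cover $\mathcal{K}(Y)=\mathcal{S}\cup\bigcup_{n\in\N}\mathcal{K}(E_n)$ by countably many closed sets makes that step immediate, and your lower bound via the connected subspace $\mathcal{F}_1(Y)\cong Y$ is cleaner than the paper's appeal to $\mathcal{L}$ being $AZD$ but not zero-dimensional (which itself requires an argument). So your route buys a more self-contained and slightly more rigorous sum-theorem step, at the cost only of the bookkeeping you already flag as the main burden.
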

\begin{proof}
By Theorem \ref{dimendiondeZ} we have the result.
\end{proof}

\textbf{Proof of Theorem \ref{dijstra}}
\begin{proof}
Consider the space $P$. By Theorem \ref{dimendiondeZ} we have the result.
\end{proof}

Note that the spaces given in the Corollaries \ref{pol} and \ref{dijstra} are unions of $ AZD $ spaces.
  A natural question is:
  \\
  Does every space $ Z $ of dimension 1 that is not $ AZD $ and  is a finite union of subspaces $ AZD $ satisfy
  that $ dim (\mathcal{K} (Z)) = 1 $ ?
 The answer to this question is negative because $ [0,1] $ is not an $ AZD $ space,
  but is a union of $ \Q \cap [0,1] $ and $ \mathbb{P} \cap [0,1] $ which are $ AZD $ spaces, and $ dim (\mathcal{K} ([0,1])) $ is not 1.
  
On the other hand it is known that if $X$ is a compact space of dimension 1, then the dimension of $\mathcal{K}(X)$ is not finite (see \cite[pag 123]{dimension1}). This implies that if a space $X$ has a compact subset of dimension 1 then the dimension of $\mathcal{K}(X)$ is not finite. Then for $\mathcal{K}(X)$ to have dimension 1 each $A\in \mathcal{K}(X)$ must have dimension zero.
With the following Theorem, we will show that it is not enough that the compact subsets of a space $X$ of dimension 1 have dimension 0 for that hyperspace of compact subsets of $X$ have to dimension 1.

\begin{teo}\cite[Theorem 4.1]{pol}\label{poll}
There exists a space $X$ of dimension 1 such that all its compacta have dimension 0 and $dim (X^2)=2$.
\end{teo}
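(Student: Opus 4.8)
The plan is first to reduce the three demands to a single cleaner one. The requirement that every compactum of $X$ be zero-dimensional holds automatically as soon as $X$ is hereditarily disconnected: a compact subset $K\subseteq X$ is then itself hereditarily disconnected, and for a compact metric space hereditary disconnectedness, total disconnectedness and zero-dimensionality all coincide, so $\dim K=0$. (The same remark applied to $X^2$, whose coordinate projections of a compactum are compacta of $X$, shows that $X^2$ will also have all of its compacta zero-dimensional, which is the phenomenon the authors want to highlight.) Thus it suffices to produce a hereditarily disconnected space $X$ with $\dim X=1$ and $\dim(X^2)=2$; such an $X$ cannot be almost zero-dimensional, since the square of an almost zero-dimensional space is again almost zero-dimensional and hence at most one-dimensional.

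For the construction I would work inside a one-dimensional ambient space so as to make the two upper bounds free. Let $C$ be a Cantor set and put $\Lambda=C\times[0,1]$, a one-dimensional space whose connected components are the vertical arcs $\{c\}\times[0,1]$; any subspace $X\subseteq\Lambda$ then satisfies $\dim X\le\dim\Lambda=1$, and $X^2\subseteq\Lambda^2\cong C^2\times[0,1]^2$ gives $\dim(X^2)\le 2$ by monotonicity. The pair $A=C\times\{0\}$, $B=C\times\{1\}$ is essential in $\Lambda$. The real work of the construction is to erase points of $\Lambda$ by a transfinite recursion of length $\mathfrak{c}$ ---in the spirit of the Knaster--Kuratowski fan--- so that the surviving set $X$ is hereditarily disconnected (every vertical arc is broken into a punctiform set) while the trace $(A\cap X,\,B\cap X)$ remains essential; this already yields a totally disconnected $X$ with $\dim X=1$, exactly the Erd\H{o}s/Knaster--Kuratowski type of behaviour.

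The main obstacle is the lower bound $\dim(X^2)\ge 2$, which must be engineered during the erasure and then verified. Concretely I would exhibit in $X^2$ an essential family of two pairs, namely the two coordinate copies $(A_1,B_1)$ and $(A_2,B_2)$ of the essential pair of $X$ (one for each $[0,1]$-factor), and prove that no separators $L_1,L_2$ with $L_i$ between $A_i$ and $B_i$ can be chosen disjoint. This is precisely the step that collapses for Erd\H{o}s space, where $\E\cong\E^2$ stays one-dimensional because the disconnection is thorough enough to let separators slip through the gaps; so the recursion must be conservative, reserving at each stage a point that will force $L_1\cap L_2\neq\emptyset$ for every candidate pair of separators. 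The delicate heart of the argument is to run the two bookkeeping schemes ---the one destroying connectedness of $X$ and the one protecting joint essentiality in $X^2$--- simultaneously without conflict, and I expect verifying their compatibility to absorb the bulk of the proof.
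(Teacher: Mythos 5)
First, note that the paper does not prove this statement at all: it is quoted verbatim as \cite[Theorem 4.1]{pol} from van Mill and Pol, so there is no internal proof to compare yours against. Your proposal therefore has to stand on its own as a proof, and it does not: it is a plan whose hardest step is announced rather than carried out. The preliminary reductions are fine --- hereditary disconnectedness does imply that all compacta of $X$ (and of $X^2$) are zero-dimensional, and working inside $\Lambda=C\times[0,1]$ does give the upper bounds $\dim X\le 1$ and $\dim(X^2)\le 2$ for free by monotonicity of dimension for subspaces of separable metric spaces. The construction of a hereditarily disconnected $X\subseteq\Lambda$ in which the pair $(A\cap X, B\cap X)$ stays essential is also a known (if already nontrivial) type of argument.

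The genuine gap is the lower bound $\dim(X^2)\ge 2$, which is the entire content of the theorem, and which you explicitly defer (``I expect verifying their compatibility to absorb the bulk of the proof''). Saying that the recursion should ``reserve at each stage a point that will force $L_1\cap L_2\neq\emptyset$'' is not an argument: the sets $L_1,L_2$ are relative closed separators in $X^2$, and whether a given closed subset of $\Lambda^2$ traces to a separator between $A_i\cap X^2$ and $B_i\cap X^2$ depends on the global structure of the space $X$ you are still in the middle of building. You give no scheme for enumerating the candidates, no argument that at each stage a suitable point of $X^2$ (i.e.\ a \emph{pair} of surviving points, which constrains two coordinates of the construction simultaneously) is available to be reserved, and no argument that these reservations do not destroy hereditary disconnectedness. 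Your own observation that the analogous statement fails for Erd\H{o}s space, where $\E\cong\E^2$ is one-dimensional, shows that this step cannot be waved through: the dichotomy between the Erd\H{o}s-type behaviour and the behaviour you need is exactly what van Mill and Pol's Theorem 4.1 establishes, by a substantially more involved argument than the bookkeeping you sketch. As written, the proposal reduces the theorem to a statement essentially as hard as the theorem itself.
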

\begin{ex}
Let $Y=X\times \{0,1\}$ where $X$ is as in Theorem \ref{poll}, then $dim(Y)=1$ and for each compact subset $F$ of $X$ we have that $dim(F)=0$. 
Let $f: X^2 \to \mathcal{K}(X)$ given by $f(x,y)=\{(x,0), (y,1)\}$. Note that $f$ is an embedding. This implies that $dim(\mathcal{K}(Y))\geq 2$.
\end{ex}
\begin{pre}
Let $X$ be a space of dimension 1, such that $dim (X^\omega)=1$ and for each $A\in \mathcal{K}(X)$, $dim (A)=0$.
Does $\mathcal{K}(X)$ have dimension 1?.
\end{pre}

\section{Acknowledgement}
   I would like to to thank Roman Pol for useful suggestions concerning the topic of this paper.

\end{document}